\documentclass{amsart}

\setlength{\parindent}{11pt}

\usepackage{verbatim}
\usepackage{graphicx}
\usepackage{amsfonts,amsmath,latexsym,amssymb,amsthm}
\usepackage{geometry}
 \geometry{
 a4paper,
 total={210mm,297mm},
 left=20mm,
 right=20mm,
 top=20mm,
 bottom=20mm,
 }

\newcounter{lemmacounter}

\newcounter{dummycounter}
\newcounter{propcounter}

\newcounter{emptycounter}

\newcounter{probcounter}

\newtheorem{lemma}[lemmacounter]{Lemma}

\newtheorem{proposition}[propcounter]{Proposition}

\newtheorem{problem}[probcounter]{Problem}

\newcounter{eqncounter}

\numberwithin{equation}{eqncounter}

%
%

\def\IR{\mathbb R}

\def\IN{\mathbb N}

\def\IQ{\mathbb Q}

\def\bz{{\bf z}}

\def\bz0{{\bf z^{(0)}}}
\def\bz1{{\bf z^{(1)}}}

\def\grmnm1{\mathcal{M}^*_K(n-1,X)}

\def\grmnmnew1{\mathcal{M}^*_K(n-1,X^n)}

%
%
\renewcommand{\vec}[1]{\mbox{\boldmath$#1$}}
%
%

%
%

%
%


\def\Oseen{{\mathcal{O}}}

\def\C{{\mathfrak{C}}}

\def\D{{\mathfrak{D}}}

\let\rho\varrho
\def\ta_0{\tau}

\def\g0{\ta_0\sigma(\C_0^{-1}\D)}
\def\G0{\tau\Lambda(\D)}

\def\Qbar{\overline{\IQ}}

\def\v0{\vec{0}}
\def\d_1{\kappa}

%
%
%

%
%

\title{Lehmer without Bogomolov}
\author{Fabien Pazuki, Niclas Technau, and Martin Widmer}
\address{Fabien Pazuki. University of Copenhagen, Institute of Mathematics, Universitetsparken 5, 2100 Copenhagen, Denmark, and Universit\'e de Bordeaux, IMB, 351, cours de la Lib\'eration, 33400 Talence, France.}
\email{fpazuki@math.ku.dk}
\address{Niclas Technau. Department of Mathematics, University of Wisconsin-Madison, 480 Lincoln Dr, Madison, WI-53706, USA}
\email{technau@wisc.edu}
\address{Martin Widmer. Department of Mathematics, Royal Holloway, University of London,
Egham, Surrey, TW20 0EX, United Kingdom}
\email{Martin.Widmer@rhul.ac.uk}


\begin{document}

\subjclass[2020]{11G50, 11R04}

\thanks{NT is supported by the Austrian Science Fund (FWF): 
project J 4464-N}

\maketitle

\begin{abstract}
We construct fields of algebraic numbers that have the Lehmer property but not the Bogomolov property. This answers a recent implicit question of Pengo and the first author.
\end{abstract}

\section{Introduction}
For a subset $S\subset \Qbar$ of  the algebraic numbers $\Qbar$ and a function $f:\Qbar\to \IR$ we say $S$ is $f$-Northcott  if $\{\alpha\in S; f(\alpha)\leq X\}$ is finite for each $X\in \IR$. And we say
$S$ is $f$-Bogomolov if $0$ is not an accumulation point of $f(S)$ (wrt the usual topology on $\IR$). 
Here we are interested only in the case $f(\alpha)=(\deg \alpha)^\gamma h(\alpha)$ 
for some fixed $\gamma\in \IR$ where $h(\cdot)$ denotes the absolute logarithmic Weil height on $\Qbar$.

We say  $S$ is $\gamma$-Northcott (or  $\gamma$-Bogomolov) if  $S$ is $f$-Northcott  (or $f$-Bogomolov) for  $f(\alpha)=(\deg \alpha)^\gamma h(\alpha)$. 
So $0$-Northcott and  $0$-Bogomolov are the usual Northcott  and  Bogomolov property (formally introduced by Bombieri and Zannier \cite{BoZa}, and studied, e.g.,  in \cite{BoZa,DvZa,AmorosoDavidZannier, AmDv, AmDa,HabeggerQEtor,CheccoliWidmer,Fehm,Pottmeyer2015,Grizzard2015}), whereas  $1$-Bogomolov was recently introduced by Pazuki and Pengo in \cite{PazukiPengo20} as Lehmer property. 
The authors of the latter article implicitly raised the problem of constructing a field with Lehmer  property that fails to have Bogomolov property. 
Of course, Lehmer's conjecture claims that $\Qbar$ itself is such a field but Lehmer's conjecture is still open.
\begin{problem}\label{QNBG}
Construct a subfield of $\Qbar$ with Lehmer property that fails to have Bogomolov property.
\end{problem}

Here we prove the following stronger result.

\begin{proposition}\label{PropNBG}
For each $0<\gamma\leq 1$ and each $\epsilon>0$ one can construct a field $L$ such that $L$ is  $\gamma$-Northcott but not $(\gamma-\epsilon)$-Bogomolov. 
\end{proposition}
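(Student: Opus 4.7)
The plan is to construct $L$ as an explicit infinite Kummer-type tower. Choose two disjoint sequences of rational primes $(p_n)_{n \ge 1}$ and $(q_n)_{n \ge 1}$, with $\log p_n \asymp q_n^{1-\gamma+\epsilon/2}$ and with the $q_n$'s growing so rapidly that $q_{n+1} \ge (q_1 \cdots q_n)^{2(1-\gamma)/\epsilon + 1}$. Set $\alpha_n := p_n^{1/q_n}$, $L_N := \IQ(\alpha_1, \ldots, \alpha_N)$, and $L := \bigcup_N L_N$. Inductively $p_n$ is unramified in $L_{n-1}$ (which is ramified only at the primes $p_1, \ldots, p_{n-1}, q_1, \ldots, q_{n-1}$), so Eisenstein applied to $x^{q_n} - p_n$ at a prime above $p_n$ yields $[L_n : L_{n-1}] = q_n = \deg_\IQ \alpha_n$ and $h(\alpha_n) = (\log p_n)/q_n$. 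The sequence $(\alpha_n)$ then witnesses the failure of $(\gamma - \epsilon)$-Bogomolov: $(\deg_\IQ \alpha_n)^{\gamma-\epsilon} h(\alpha_n) = q_n^{\gamma-\epsilon-1} \log p_n \asymp q_n^{-\epsilon/2} \to 0$.

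The main effort goes into verifying $\gamma$-Northcott. For $\beta \in L \setminus \IQ$ let $N = N(\beta)$ be the least index with $\beta \in L_N$. The central claim is the lower bound
\[
\deg_\IQ(\beta)\, h(\beta) \;\gg_{\gamma,\epsilon}\; \log p_N \;\asymp\; q_N^{1-\gamma+\epsilon/2}.
\]
Granting this, $\deg_\IQ(\beta) \le D_N := q_1\cdots q_N$ and $\gamma \le 1$ give
\[
(\deg_\IQ \beta)^\gamma h(\beta) \;\ge\; D_N^{\gamma-1} \log p_N \;\gg\; D_{N-1}^{\gamma-1} q_N^{\epsilon/2},
\]
and the growth condition on the $q_n$ forces the right-hand side to $\infty$ with $N(\beta)$. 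Hence any bound $(\deg_\IQ \beta)^\gamma h(\beta) \le X$ confines $\beta$ to some $L_{N_0(X)}$, and classical Northcott in the number field $L_{N_0(X)}$ completes the proof, the only torsion in $L$ being $\{\pm 1\}$ since $L$ is totally real.

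The main obstacle is the central claim. Since $q_N$ is prime and $\beta \notin L_{N-1}$, $L_{N-1}(\beta) = L_N$, so we may write $\beta = \sum_{i=0}^{q_N-1} c_i \alpha_N^i$ with $c_i \in L_{N-1}$ and some $c_{i_0} \neq 0$ having $i_0 \ge 1$. At any prime $\mathfrak{p}$ of $L_N$ above $p_N$, the extension $L_N/L_{N-1}$ is totally ramified with $\alpha_N$ a uniformizer, so $v_\mathfrak{p}(\alpha_N^i) = i$; and since $p_N$ is unramified in $L_{N-1}$, $v_\mathfrak{p}(c_i) \in q_N \IZ$. Thus the nonzero terms $c_i \alpha_N^i$ have pairwise distinct valuations modulo $q_N$, giving tight control on $v_\mathfrak{p}(\beta)$. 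Complementing this with an archimedean discrete Fourier transform $c_i \alpha_N^i = q_N^{-1} \sum_k \zeta_{q_N}^{-ki} \sigma_k(\beta)$ under $\sigma_k \in \Gal(L_N(\zeta_{q_N})/L_{N-1}(\zeta_{q_N}))$, which yields $\max_k |\sigma_k(\beta)|_v \ge |c_{i_0}|_v \, p_N^{i_0/q_N}$ at each archimedean place, and then summing both kinds of contributions via the product formula, yields the central claim after a careful bookkeeping of ramification indices and of the denominators appearing when the $c_i$ are non-integral.
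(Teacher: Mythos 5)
Your construction and overall architecture coincide with the paper's: the same Kummer tower $\IQ(p_n^{1/q_n})$, the same witnesses $\alpha_n=p_n^{1/q_n}$ for the failure of $(\gamma-\epsilon)$-Bogomolov, and the same reduction of $\gamma$-Northcott to a height lower bound for elements first appearing at level $N$, finished by classical Northcott in a fixed number field. (One harmless difference: your reduction uses the upper bound $\deg_\IQ\beta\le D_N=q_1\cdots q_N$, which is why you need the extra growth hypothesis $q_{n+1}\ge(q_1\cdots q_n)^{2(1-\gamma)/\epsilon+1}$; the paper uses only the lower bound $\deg_\IQ\beta\ge[L_N:L_{N-1}]=q_N$, so that $(\deg\beta)^\gamma h(\beta)\ge q_N^{\gamma}\cdot\tfrac{\log p_N}{2q_N}(1+o(1))\asymp q_N^{\epsilon/2}$, and no such hypothesis is needed.)

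The genuine gap is in your proof of the central claim $\deg_\IQ(\beta)\,h(\beta)\gg\log p_N$. The claim itself is true, but the full Fourier-inversion argument loses a factor of $q_N$ and does not deliver it. Concretely: from $c_{i_0}\alpha_N^{i_0}=q_N^{-1}\sum_k\zeta^{-ki_0}\sigma_k(\beta)$ you obtain $\log^{+}\max_k|\sigma_k\beta|_v\ge\log|c_{i_0}|_v+\tfrac{i_0}{q_N}\log p_N$ at each archimedean place $v$ of $M=L_N(\zeta_{q_N})$, but after summing over $v$ the left-hand side can only be bounded above by $\sum_k\sum_v d_v\log^{+}|\sigma_k\beta|_v=q_N[M:\IQ]h(\beta)$ (or, passing through the house of $\beta$, by $[M:\IQ]\deg_\IQ(\beta)h(\beta)$). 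Since $i_0$ may equal $1$ (e.g.\ $\beta=c_0+c_1\alpha_N$), the best you extract is $q_N h(\beta)\gtrsim \tfrac{\log p_N}{q_N}$, i.e.\ $\deg_\IQ(\beta)h(\beta)\gg \tfrac{\log p_N}{q_N}$ --- one full factor of $q_N$ short. Feeding this weaker bound into your own bookkeeping gives $(\deg_\IQ\beta)^\gamma h(\beta)\ge D_N^{\gamma-1}q_N^{-1}\log p_N\asymp D_{N-1}^{\gamma-1}q_N^{\epsilon/2-1}\rightarrow 0$, so the Northcott argument collapses. What you actually need is $h(\beta)\gg\tfrac{\log p_N}{q_N}$, after which $\deg_\IQ\beta\ge q_N$ finishes the claim. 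Two standard ways to obtain it: (i) the paper's route, namely Silverman's discriminant--height inequality combined with the fact that $p_N^{[L_{N-1}:\IQ](q_N-1)}$ divides $N_{L_{N-1}/\IQ}(D_{L_N/L_{N-1}})$ --- this works because the discriminant collects a $p_N$-contribution from every pair of conjugates rather than from a single Fourier coefficient; or (ii) keep your $\mathfrak{p}$-adic analysis but apply it to the difference of just two conjugates, $\sigma\beta-\beta=\sum_{i\ge1}c_i\alpha_N^{i}(\zeta^{i}-1)$, whose $\mathfrak{p}$-valuation is $\not\equiv 0\ (\mathrm{mod}\ q_N)$ at every $\mathfrak{p}\mid p_N$, whence $h(\sigma\beta-\beta)\ge\tfrac{\log p_N}{2q_N}$, while $h(\sigma\beta-\beta)\le 2h(\beta)+\log 2$; this costs a factor $2$ instead of $q_N$.
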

Since $\gamma$-Northcott implies $\gamma$-Bogomolov it follows in particular, that one can construct a field with Lehmer but not Bogomolov property.
Here is a more explicit form of the previous proposition.
\begin{proposition}\label{PropNBGexplicit}
Let $0<\gamma\leq 1$, $\epsilon>0$ and choose sequences of primes $(d_i)_i$ and $(p_i)_i$ with $d_i\geq 2d_{i-1}$, and $e^{d_i^{1-\gamma+\epsilon/2}}\leq p_i\leq 2e^{d_i^{1-\gamma+\epsilon/2}}$.
Then $\IQ(p_i^{1/d_i}; i\in \IN)$ is  $\gamma$-Northcott but not $(\gamma-\epsilon)$-Bogomolov. 
\end{proposition}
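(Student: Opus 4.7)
The plan is to prove the two assertions separately.

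For the failure of $(\gamma-\epsilon)$-Bogomolov, I would simply exhibit the generators $\alpha_i:=p_i^{1/d_i}$ as witnesses. Eisenstein's criterion applied to $X^{d_i}-p_i$ gives $\deg\alpha_i=d_i$ and $h(\alpha_i)=(\log p_i)/d_i$, and the prescribed upper bound $p_i\le 2e^{d_i^{1-\gamma+\epsilon/2}}$ yields
\[
(\deg\alpha_i)^{\gamma-\epsilon}h(\alpha_i)=d_i^{\gamma-\epsilon-1}\log p_i\le 2\,d_i^{-\epsilon/2}\longrightarrow 0,
\]
so $0$ is an accumulation point of $\{(\deg\alpha)^{\gamma-\epsilon}h(\alpha):\alpha\in L\}$.

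For the $\gamma$-Northcott property, I would set $L_n:=\IQ(\alpha_1,\dots,\alpha_n)$, so $L=\bigcup_n L_n$. Since the $d_i$ are distinct primes, a short induction (using that $X^{d_n}-p_n$ is Eisenstein over $L_{n-1}$) gives $[L_n:L_{n-1}]=d_n$ and $[L_n:\IQ]=d_1\cdots d_n$. Each $L_n$ is a number field, hence satisfies the classical Northcott property, so it is enough to prove
\[
\inf\bigl\{(\deg\beta)^\gamma h(\beta):\beta\in L_n\setminus L_{n-1}\bigr\}\ \longrightarrow\ \infty\qquad(n\to\infty).
\]

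The structural input I would use is a ramification statement. Fix $\beta\in L_n\setminus L_{n-1}$ with $K:=\IQ(\beta)$ and $D:=[K:\IQ]$. Since $[L_n:L_{n-1}]=d_n$ is prime and $\beta\notin L_{n-1}$, we have $L_n=K\cdot L_{n-1}$. Moreover, $p_n$ is unramified in $L_{n-1}$ (since $p_n$ far exceeds every prime dividing $\mathrm{disc}(L_{n-1})$) while $X^{d_n}-p_n$ is Eisenstein over $L_{n-1}$, so $L_n/L_{n-1}$ is totally ramified of index $d_n$ above $p_n$. Compositum rules force $d_n\mid e_K(p_n)\mid d_n$, hence $e_K(p_n)=d_n$; in particular $d_n\mid D$, so $D\ge d_n$. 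The same compositum argument, applied to the subfields $L_n^{(\widehat i)}:=\IQ(\alpha_j:j\le n,\,j\ne i)$ (which have degree $(d_1\cdots d_n)/d_i$ over $\IQ$), shows that $p_i$ ramifies in $K$ with index $d_i$ whenever $d_i\mid D$. Tame ramification (ensured by $p_i>d_i$) then gives
\[
|\mathrm{disc}(K)|\ \geq\ \prod_{i:\,d_i\mid D}p_i^{(d_i-1)D/d_i},
\]
and plugging this into Silverman's height--discriminant inequality $h(\beta)\ge (\log|\mathrm{disc}(K)|-D\log D)/(2D(D-1))$ gives the desired lower bound on $h(\beta)$.

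The main obstacle will be to convert this into the uniform estimate $(\deg\beta)^\gamma h(\beta)\ge c\,d_n^{\epsilon/2}$ across the full range $\gamma\in(0,1]$. In the Lehmer case $\gamma=1$, the single prime $p_n$ already suffices: one gets $(\deg\beta)h(\beta)\ge c\log p_n\asymp c\,d_n^{\epsilon/2}\to\infty$, and we are done. For $\gamma<1$, however, the factor $D^{1-\gamma}$ appearing in the naive estimate can be as large as $(d_1\cdots d_n)^{1-\gamma}$, swamping the single-prime contribution; here one must combine the joint ramification of \emph{all} $p_i$ with $d_i\mid D$ with the precise calibration $\log p_i\asymp d_i^{1-\gamma+\epsilon/2}$ and the geometric spacing $d_{i+1}\ge 2d_i$ to show that, for every admissible factorisation $D=\prod_{i\in T}d_i$ with $n\in T$, the combined discriminant contribution beats $D^{1-\gamma}$ by a factor growing like $d_n^{\epsilon/2}$. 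This combinatorial--arithmetic balancing is the delicate heart of the proof, and the exponent $1-\gamma+\epsilon/2$ in the hypothesis has been chosen exactly so that it works out.
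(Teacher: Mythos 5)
Your non-Bogomolov argument is correct and is exactly the paper's: the generators $\alpha_i=p_i^{1/d_i}$ satisfy $d_i^{\gamma-\epsilon-1}\log p_i\le 2d_i^{-\epsilon/2}\to 0$. Your reduction of the Northcott part to a lower bound on $L_n\setminus L_{n-1}$, and the Eisenstein/unramifiedness bookkeeping, also match the paper. The gap is in the core estimate. You apply Silverman's inequality in its \emph{absolute} form over $\IQ$, with denominator $2D(D-1)$ where $D=[\IQ(\beta):\IQ]$, and you hope that the joint ramification of all $p_i$ with $d_i\mid D$ makes $\log|\mathrm{disc}(K)|$ beat $D^{1-\gamma}$. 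It does not, and no calibration of the exponents can make it: your own bound gives
\[
(\deg\beta)^{\gamma}h(\beta)\ \gtrsim\ \frac{1}{D^{1-\gamma}}\sum_{i\in T}\frac{d_i-1}{d_i}\log p_i\ \asymp\ \frac{\sum_{i\in T}d_i^{1-\gamma+\epsilon/2}}{\bigl(\prod_{i\in T}d_i\bigr)^{1-\gamma}},
\]
and already for $T=\{n-1,n\}$ (i.e.\ $D=d_{n-1}d_n$, which occurs, e.g., for $\beta=\alpha_{n-1}+\alpha_n$) the right-hand side is $\asymp d_n^{1-\gamma+\epsilon/2}/(d_{n-1}d_n)^{1-\gamma}\le d_{n-1}^{-(1-\gamma)}d_n^{\epsilon/2}$, which tends to $0$ for any fixed $\gamma<1$ once the $d_i$ grow fast enough (the sum over $T$ is dominated by its largest term, while the product in the denominator is not). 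So for $\gamma<1$ your method yields a vacuous lower bound on precisely the elements of large absolute degree, and the "combinatorial--arithmetic balancing" you defer to does not exist. Only the case $\gamma=1$ goes through as you describe.

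The paper avoids this by applying the \emph{relative} version of Silverman's inequality to the extension $K_i/K_{i-1}$, where $K_l=\IQ(p_1^{1/d_1},\dots,p_l^{1/d_l})$ and $i=\min\{l:\beta\in K_l\}$ (so that $K_{i-1}(\beta)=K_i$ by primality of $d_i$). There the denominator is $2[K_{i-1}:\IQ]\,d_i(d_i-1)$, and since $p_i^{[K_{i-1}:\IQ](d_i-1)}$ divides $N_{K_{i-1}/\IQ}(D_{K_i/K_{i-1}})$, the base-field degree cancels and one gets $h(\beta)\ge \frac{\log p_i}{2d_i}-\frac{\log d_i}{2(d_i-1)}$ --- a bound in which the absolute degree $D$ never appears. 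Combined with the crude inequality $\deg\beta\ge[K_{i-1}(\beta):K_{i-1}]=d_i$ this gives $(\deg\beta)^{\gamma}h(\beta)\ge\frac12\bigl(d_i^{\gamma-1}(\log p_i-\log d_i)-1\bigr)\to\infty$ uniformly for all $0<\gamma\le 1$. Replacing your absolute discriminant estimate by this relative one is the missing idea; the rest of your outline then closes.
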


\section{Proofs}
The following lemma  uses the method from \cite{WidmerPropN}.
\begin{lemma}\label{LemNBG}
Let $0<\gamma\leq 1$, and let $p_1,p_2,p_3,\ldots$ and $d_1,d_2,d_3,\ldots$ be sequences of positive rational primes such that ${d_i}^{\gamma-1}(\log p_i-\log d_i)\rightarrow \infty$ as $i\rightarrow \infty$, and
$p_i\notin \{d_1,p_1,\ldots,d_{i-1},p_{i-1}\}$ for all $i>i_0$.
Then $L=\IQ(p_i^{1/d_i}; i\in \IN)$ is  $\gamma$-Northcott. 
\end{lemma}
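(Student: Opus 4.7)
The plan is to imitate the tower argument from \cite{WidmerPropN}. Put $K_n:=\IQ(p_1^{1/d_1},\ldots,p_n^{1/d_n})$, so that $L=\bigcup_n K_n$ is a directed union of number fields. For each $X>0$ the goal is to show that every $\alpha\in L$ with $(\deg\alpha)^\gamma h(\alpha)\leq X$ lies in some fixed $K_{n_0(X)}$; Northcott in this number field, together with $\gamma>0$ (which yields $h(\alpha)\leq X$), then gives finitely many such $\alpha$.

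The heart is a lower bound on $h(\alpha)$ for $\alpha\in K_n\setminus K_{n-1}$, $n>i_0$. An induction on $n$ shows that the rational primes ramifying in $K_{n-1}/\IQ$ are contained in $\{p_i,d_i:i<n\}$; in particular $p_n$ is unramified there. Combined with Eisenstein at $p_n$ in $\IQ(p_n^{1/d_n})/\IQ$ and the primality of $d_n$, this forces $[K_n:K_{n-1}]=d_n$, whence $K_{n-1}(\alpha)=K_n$ and $\deg\alpha\geq d_n$. Each prime $\mathfrak{P}$ of $K_n$ above $p_n$ is then totally ramified over $K_{n-1}$, so $v_\mathfrak{P}(\mathfrak{d}_{K_n/K_{n-1}})\geq d_n-1$ and therefore
\[
\bigl|N_{K_{n-1}/\IQ}(\operatorname{disc}(K_n/K_{n-1}))\bigr|\geq p_n^{(d_n-1)[K_{n-1}:\IQ]}.
\]
When $\alpha$ is an algebraic integer, this bound transfers to $|N_{K_{n-1}/\IQ}(\operatorname{disc}(f_\alpha))|$ with $f_\alpha$ the minimal polynomial of $\alpha$ over $K_{n-1}$. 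Conversely, Mahler's inequality $|\operatorname{disc}(f_\alpha)|_v\leq d_n^{d_n}M_v(f_\alpha)^{2(d_n-1)}$ at archimedean places (with the sharper ultrametric analogue at the remaining ones), summed with the standard weights and combined with the identity $\sum_v \tfrac{[K_{n-1,v}:\IQ_p]}{[K_{n-1}:\IQ]}\log M_v(f_\alpha)=d_n h(\alpha)$, yields
\[
h(\operatorname{disc}(f_\alpha))\leq d_n\log d_n+2d_n(d_n-1)h(\alpha).
\]
Combining the two sides via $\log|N_{K_{n-1}/\IQ}(\beta)|\leq[K_{n-1}:\IQ]h(\beta)$ and rearranging produces
\[
h(\alpha)\geq\frac{\log p_n-\log d_n}{2d_n}-\frac{\log d_n}{2d_n(d_n-1)}.
\]
Multiplying by $(\deg\alpha)^\gamma\geq d_n^\gamma$ then shows that $(\deg\alpha)^\gamma h(\alpha)\geq\tfrac12 d_n^{\gamma-1}(\log p_n-\log d_n)-O(1)$, which tends to infinity by the hypothesis $d_n^{\gamma-1}(\log p_n-\log d_n)\to\infty$.

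Consequently, for fixed $X$ only finitely many layers $n$ can host an $\alpha$ with $(\deg\alpha)^\gamma h(\alpha)\leq X$, and any such $\alpha$ lies in some $K_{n_0(X)}$; Northcott in that number field completes the argument. The main technical obstacle I foresee is the integrality hypothesis used in the discriminant comparison: for a general $\alpha\in K_n$ one must first pass to a suitable integral multiple $\lambda\alpha\in\mathcal{O}_{K_n}$ with $\lambda\in\IZ_{>0}$, controlled via $\log\lambda\leq\deg(\alpha)h(\alpha)$, and then absorb the ensuing perturbation into harmless multiplicative constants.
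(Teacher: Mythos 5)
Your architecture is the paper's: the same tower $K_n$, the same ramification and Eisenstein argument giving $[K_n:K_{n-1}]=d_n$, $K_{n-1}(\alpha)=K_n$ and $\deg\alpha\geq d_n$, the same divisibility $p_n^{(d_n-1)[K_{n-1}:\IQ]}\mid N_{K_{n-1}/\IQ}(D_{K_n/K_{n-1}})$, and the same final step of multiplying the height lower bound by $(\deg\alpha)^\gamma\geq d_n^\gamma$. Where the paper simply quotes Silverman's discriminant--height inequality \cite[Theorem 2]{Silverman}, you re-derive it, and that is where there is a genuine gap.

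As you note, the comparison $N_{K_{n-1}/\IQ}(D_{K_n/K_{n-1}})\mid N_{K_{n-1}/\IQ}(\operatorname{disc}(f_\alpha))$ requires $\alpha$ integral, but your reduction to the integral case does not work quantitatively. Taking $\lambda\in\IZ_{>0}$ with $\lambda\alpha\in\Oseen_{K_n}$, the only general bound is $\log\lambda\leq \deg(\alpha)\,h(\alpha)$ with $\deg\alpha=[K_n:\IQ]=d_n[K_{n-1}:\IQ]$; applying your inequality to $\lambda\alpha$ and using $h(\lambda\alpha)\leq h(\alpha)+\log\lambda$ only yields
\[
h(\alpha)\ \geq\ \frac{1}{1+\deg\alpha}\Bigl(\frac{\log p_n}{2d_n}-\frac{\log d_n}{2(d_n-1)}\Bigr),
\]
a loss of a factor of order $\deg\alpha$, not a harmless constant. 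After multiplying by $(\deg\alpha)^\gamma$ you are left with roughly $(\deg\alpha)^{\gamma-1}\log p_n/(2d_n)$, and since $\gamma\leq 1$ and $[K_{n-1}:\IQ]=d_1\cdots d_{n-1}$ grows without any control relative to $p_n$, this need not tend to infinity under the hypothesis $d_n^{\gamma-1}(\log p_n-\log d_n)\to\infty$; for the explicit sequences of Proposition \ref{PropNBGexplicit} it in fact tends to $0$. The repair is to avoid a global denominator entirely: Silverman's inequality holds for an arbitrary generator of $K_n/K_{n-1}$ because its proof runs place by place through the product formula, the local factors $\max(1,|\alpha_j|_v)$ at the finite places absorbing the negative valuations of $\operatorname{disc}(f_\alpha)$. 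Either cite \cite[Theorem 2]{Silverman} as the paper does, or carry out your Mahler-measure estimate locally at the finite places instead of clearing denominators with a global integer.
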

\begin{proof}
Set $K_0=\IQ$ and $K_{i}=K_{i-1}(p_{i}^{1/d_{i}})$. Suppose $L$ does not have $\gamma$-Northcott. Then there exists $X>0$ and 
a sequence $\{\alpha_j\}\subset L$ of pairwise distinct elements with $(\deg \alpha_j)^\gamma h(\alpha_j)\leq X$. 
For $\alpha_j$ we set $i=i(\alpha_j):=\min\{l; \alpha_j\in K_l\}$. By Northcott's Theorem we infer that $i\rightarrow \infty$ as $j\rightarrow \infty$.
From now on we assume $i>i_0$.
Next note that only primes in $\{d_1,p_1,\ldots,d_{i-1},p_{i-1}\}$ can ramify in $K_{i-1}$. We conclude that $p_i$ is unramified in $K_{i-1}$, 
and hence $x^{d_i}-p_i$ is an Eisenstein polynomial in $\Oseen_{K_{i-1}}[x]$. Thus, $[K_i:K_{i-1}]=d_i$ is prime, and we conclude that
$K_{i-1}(\alpha_j)=K_i$. An inequality of Silverman \cite[Theorem 2]{Silverman} (see also \cite[(5)]{WidmerPropN}) implies that
\begin{alignat*}1
h(\alpha_j)\geq 
\frac{\log (N_{K_{i-1}/\IQ}(D_{K_i/K_{i-1}}))}{2[K_{i-1}:\IQ]d_i(d_i-1)}-\frac{\log d_i}{2(d_i-1)},
\end{alignat*}
where $N_{K_{i-1}/\IQ}(\cdot)$ denotes the norm and $D_{K_i/K_{i-1}}$ denotes the relative discriminant.
A straightforward calculation shows (see \cite[Proof of Theorem 4]{WidmerPropN}) that $p_{i}^{[K_{i-1}:\IQ](d_i-1)}$ divides $N_{K_{i-1}/\IQ}(D_{K_i/K_{i-1}})$. Hence,
\begin{alignat*}1
h(\alpha_j)\geq \frac{\log p_i}{2d_i}-\frac{\log d_i}{2(d_i-1)}=\frac{1}{2d_i^{\gamma}} \left(\frac{\log p_i-\log d_i}{d_i^{1-\gamma}}-\frac{\log d_i}{d_i^{2-\gamma}(1-1/d_i)}\right)
\geq \frac{1}{2d_i^{\gamma}} \left(\frac{\log p_i-\log d_i}{d_i^{1-\gamma}}-1\right).
\end{alignat*}
Finally, we note that $\deg(\alpha_j)\geq [K_{i-1}(\alpha_j):K_{i-1}]=d_i$ thus
\begin{alignat*}1
X\geq (\deg \alpha_j)^\gamma h(\alpha_j)\geq \frac{1}{2}\left(\frac{\log p_i-\log d_i}{d_i^{1-\gamma}}-1\right)\rightarrow \infty \text{ as } j\rightarrow \infty.
\end{alignat*}
This yields the desired contradiction, and hence proves that  $L$ is  $\gamma$-Northcott
\end{proof}

\begin{lemma}\label{Lem2NBG}
Let $0<\gamma\leq 1$, $\epsilon>0$, and let $p_1,p_2,p_3,\ldots$ and $d_1,d_2,d_3,\ldots$ be sequences of positive rational primes such that ${d_i}^{\gamma-1}(\log p_i-\log d_i)\rightarrow \infty$ 
and ${d_i}^{\gamma-\epsilon -1}\log p_i\rightarrow 0$ as $i\rightarrow \infty$,  and
$p_i\notin \{d_1,p_1,\ldots,d_{i-1},p_{i-1}\}$ for all $i>i_0$.
Then $L=\IQ(p_i^{1/d_i}; i\in \IN)$ is  $\gamma$-Northcott but not $(\gamma-\epsilon)$-Bogomolov. 
\end{lemma}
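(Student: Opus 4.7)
The plan is to split the proof into two parts. The $\gamma$-Northcott assertion follows at once from Lemma \ref{LemNBG}, whose hypotheses are already implied by the present ones; the extra assumption ${d_i}^{\gamma-\epsilon-1}\log p_i\to 0$ plays no role in that direction.

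For the failure of the $(\gamma-\epsilon)$-Bogomolov property, I will exhibit an explicit sequence in $L$ witnessing that $0$ is an accumulation point of the set $\{(\deg \alpha)^{\gamma-\epsilon} h(\alpha) : \alpha\in L\}$. The natural choice is $\alpha_i := p_i^{1/d_i}\in L$. Since $d_i$ and $p_i$ are rational primes with $p_i\notin\{d_1,p_1,\ldots,d_{i-1},p_{i-1}\}$ for $i>i_0$, the polynomial $x^{d_i} - p_i$ is Eisenstein at $p_i$ over $\IZ$, hence irreducible over $\IQ$ and thus the minimal polynomial of $\alpha_i$ over $\IQ$. In particular $\deg\alpha_i = d_i$, and since $\alpha_i$ is an algebraic integer its height receives no contribution from the finite places. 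At each archimedean place all conjugates of $\alpha_i$ have absolute value $p_i^{1/d_i}$, so the standard place-by-place computation yields $h(\alpha_i) = (\log p_i)/d_i$.

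Combining these two facts gives
$$
(\deg\alpha_i)^{\gamma-\epsilon} h(\alpha_i) = d_i^{\gamma-\epsilon - 1}\log p_i,
$$
which tends to $0$ by the second hypothesis, while every term is strictly positive. Consequently $0$ is an accumulation point of the image of $L$ under $\alpha\mapsto (\deg \alpha)^{\gamma-\epsilon} h(\alpha)$, so $L$ fails the $(\gamma-\epsilon)$-Bogomolov property.

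No real obstacle is anticipated here: the Northcott half is a direct citation of Lemma \ref{LemNBG}, and the Bogomolov half reduces to the well-known height computation for a radical $p^{1/d}$. The only points to verify carefully are the applicability of Eisenstein's criterion (immediate from the hypotheses on the $p_i$) and the fact that the sequence $\{(\deg\alpha_i)^{\gamma-\epsilon}h(\alpha_i)\}$ accumulates at $0$ through nonzero values, which is clear since each $h(\alpha_i)$ is strictly positive.
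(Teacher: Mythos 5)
Your proposal is correct and follows exactly the route of the paper's own (one-line) proof: cite Lemma \ref{LemNBG} for the $\gamma$-Northcott part and observe that $(\deg p_i^{1/d_i})^{\gamma-\epsilon}h(p_i^{1/d_i})=d_i^{\gamma-\epsilon-1}\log p_i\to 0$ through positive values to defeat $(\gamma-\epsilon)$-Bogomolov. You merely spell out the standard details (Eisenstein irreducibility and the height of a radical) that the paper leaves implicit.
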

\begin{proof}
Since $(\deg p_i^{1/d_i})^{\gamma-\epsilon} h(p_i^{1/d_i})={d_i}^{\gamma-\epsilon -1}\log p_i\rightarrow 0$
the claim follows immediately from Lemma \ref{LemNBG}.
\end{proof}
Proposition \ref{PropNBGexplicit} now follows easily from Lemma \ref{Lem2NBG} as  any sequences of primes $d_i$ with $d_i\geq 2d_{i-1}$, and  $p_i$ with $e^{d_i^{1-\gamma+\epsilon/2}}\leq p_i\leq 2e^{d_i^{1-\gamma+\epsilon/2}}$
satisfy the required conditions of Lemma \ref{Lem2NBG}. 

Finally, note that  if $\gamma=1$ we can take any sequence of primes $d_i$ with $d_i\geq 2d_{i-1}$, take $\delta>1$,
and choose primes $p_i$ with $d_i^{\delta}\leq p_i\leq 2d_i^{\delta}$. 

\bibliographystyle{amsplain}
\bibliography{literature}

\end{document}